\theoremstyle{plain}
\newtheorem{thm}{Theorem}[section]
\newtheorem{cor}{Corollary}[section]
\newtheorem{lem}{Lemma}[section]
\theoremstyle{definition}
\newtheorem{rem}{Remark}[section]
\renewcommand{\b}[1]{\mathbf{#1}}
\renewcommand{\r}[1]{\mathrm{#1}}
\renewcommand{\(}{\left(}
\renewcommand{\)}{\right)}
\renewcommand{\leq}{\leqslant}
\renewcommand{\geq}{\geqslant}
\newcommand{\be}{\b e}
\newcommand{\bm}{\b m}
\newcommand{\sep}{\r{sep}}
\def\var{\varphi}
\DeclareMathSymbol{\twoheadrightarrow} {\mathrel}{AMSa}{"10}
\newcommand{\norm}[1]{\left\lVert#1\right\rVert}
\newcommand{\Rmn}[1]{\uppercase\expandafter{\romannueral#1}}
\numberwithin{equation}{subsection}
\newcommand{\fakephantomsection}{%
	\Hy@MakeCurrentHref{\@currenvir. \the\Hy@linkcounter}
	\Hy@raisedlink{\hyper@anchorstart{\@currentHref}\hyper@anchorend}%
}
\renewcommand{\det}{\mathrm{det}}
\def\b{\beta}
\def\({\left (}
\def\){\right )}
\def\<{\langle}
\def\>{\rangle}
\newcommand{\bel}[1]{\begin{equation}\label{#1}}
	\newcommand{\lab}[1]{\label{#1}}
	\newcommand{\beq}{\begin{equation}}
			\newcommand{\ea}{\end{eqnarray}}
		\newcommand{\qe}{\end{equation}}
	\newcommand{\eeq}{\end{equation}}
\newcommand{\lemref}[1]{Lemma~\ref{#1}}
\title{Symphonic map from ellipsoid to ellipsoid}\author{Xiangzhi Cao \thanks{School of information engineering,  Nanjing Xiaozhuang University,  Nanjing { 211171},  China. }}
	\newcommand{\enabstractname}{Abstract}
\newcommand{\cnabstractname}{摘要}
\numberwithin{equation}{section}
\begin{document}
	\maketitle

\begin{abstract}
	In this paper,  we proved the existence of  Symphonic map from ellipsoid to ellipsoid via ellpsoid join method. We also give sufficient condition under which there exists nonconstnat solution. We also  give Hopf construction of Symphonic map from ellipsoid to ellipsoid via ellpsoid join method. 
	\par\textbf{ Keywords} Symphonic map,   ellipsoid join 
	\par \textbf{Mathematics Subject Classification} 58C25
\end{abstract}

	\section{Backgroud}\label{ddd}
		We firstly fix some notations and conventions used in this paper.  We denote $ \mathbb{S}^{n}  $  the unit sphere of dimension $n$,  we denote $ \mathbb{S}^{n}(r)  $  the sphere of dimension $n$ with radius $r$,  the metric $ g_{\mathbb{S}^n} $ is refered to as the standard induce metric of $ \mathbb{S}^n $ .       It is well known that	a map $ f: (M, g)\to (\mathbb{S}^n ,  g_{\mathbb{S}^n})$ is harmonic map if and only if	 Euler-Lagrangian equation
	$$
	\begin{aligned}
		\Delta_g f+|\nabla f|^2f=0
	\end{aligned}
	$$
	holds. 
 When the domain manifold $ M $ is $ S^m $,  using the above Euler-Lagrangian equation,   we can construct harmonic map $ f: (\mathbb{S}^m, g)\to  (\mathbb{S}^n ,  g_{\mathbb{S}^n})  $ by restricting  the map between Euclidean space,  whose component is  harmonic homogeneos polynomials

In 1960s,  Toda \cite{MR0143217} gived complete details of these calculations of the homotopy group of spheres.  In 1970s,  Smith \cite{MR298590} considered harmonic map between spheres using join method. 
Peng and Tang\cite{MR1432425} construct harmonic representation of homotopy group of spheres.  Tang \cite{MR1480875, MR2351423} consider harmonic representation of the homotopy group $ \pi_{16p+j}(\mathbb{S}^{16p+j})$ under some conditons on $j$.   Ratto \cite{MR1044657, MR1076254, MR1022743} used ODE method to constructed harmonic map between sphere or ellipsoid.   Ding \cite{MR962492}  obtained the existence of symmetric harmonic map between sphere under some conditons.   
 
It is well known that we can parametrize  $x \in \mathbb{S}^{m_{1}+m_{2}-1}$ uniquely  by $\left(x_{1} \sin t,  x_{2} \cos t\right)$ with $x_{1} \in \mathbb{S}^{m_{1}-1},  x_{2} \in \mathbb{S}^{m_{2}-1},  t \in[0,  \pi / 2]$. 	
Given a map $f: \mathbb{S}^{m_{1}-1} \times \mathbb{S}^{m_{2}-1} \rightarrow S^{n-1}$  and real function $ \varphi(t) $,  the Hopf construction for $ f $ is given by  $ h_{f}: \mathbb{S}^{m_{1}+m_{2}-1} \rightarrow \mathbb{S}^{n}, $ which is defined as
\[
h_{f}\left(x_{1} \sin t,  x_{2} \cos t\right):=\left(f\left(x_{1},  x_{2}\right) \sin \varphi(t) ,  \cos  \varphi(t)\right). 
\]

	 
	 Let $ \lambda_1 $ and $\lambda_2 $  be the eigenvalues of $ f $.  Smith \cite{MR391127} proved that  a harmonic Hopf construction exists in the case that $ m_1=m_2=2, \lambda_1=\lambda_2. $  Ratto \cite{MR1014467} gived hopf construction  in the case that $ 4\lambda_1<(m_1-2)^2,  4\lambda_2<(m_2-2)^2$ or $ m_1=m_2=2, \lambda_1=\lambda_2 $.  Ratto\cite{MR987757} proved that no harmonic Hopf  construction can exist if $ \lambda_1 \leq  (m_1 -1)\lambda_2,  m_1 \geq3, m_2=2 $.   Gastel \cite{MR2022387} proved  Hopf construction exists if $ \lambda_1>(m_1-1)\lambda_2, m_1\geq 3, m_2=2 $.  Ding \cite{MR1298998}  proved that there is always  a harmonic Hopf construction in the case $m_1,  m_2 \geq 3$.  Later,  Gastel \cite{MR1985449}  simplified the  proof of Ding’s result in \cite{MR1298998}.  Later,  Ding,  Fan and Li \cite{MR2001704} obtained the existence of Hopf construction if $ m_1=2, m_2>2,  \lambda_1 \geq 1, \lambda_2 >\lambda_1(m_2-1) $.

%
%

Let $ f:(M, g)\to (N, h) $ be a map between Riemannian manifold.  In 2011,  Nakauchi and Takenaka \cite{MR2852338} introduced the concept of  symphonic map,  which is the critical point of the functional
$$
\begin{aligned}
	F(f)=\int_M \frac{\|f^* h \|^2}{2} dv_g. 
\end{aligned}
$$
In the sequel of this paper,  we also use the notation $ F(f) $ to denote the energy of the map $ f $. 

Let $ \{e_i\}_{i=1}^{m} $ be the local orthonormal frame,  $\sigma_f(X)=\sum_j h\left(d f(X),  d f\left(e_j\right)\right) d f\left(e_j\right) $ ,    and $$\operatorname{div}_g \sigma_f=\sum_j\left(\nabla_{e_j} \sigma_f\right)\left(e_j\right). 
$$
Its Euler-Lagrange equation of $ F(f) $ is as follows (cf.  \cite{MR2781758, MR2852338}) 
\begin{equation}\label{cg}
	\begin{aligned}
		\operatorname{div}_g \sigma_f=0. 
	\end{aligned}
\end{equation}


In 2014,  Nakauchi and Takakuwa  \cite{MR3238295}  used symphonic joining  method to  construct symphonic map between spheres.    Inspired by \cite{MR3238295} and the related results in the field of harmonic map ,  The main purpose of this paper   are to construct symphonic map between ellipsoid (see \autoref{thm3. 1}) and gives Hopf construction of symphonic map(see \autoref{thm4. 1}).  One can refer to  \cite{MR3940323, MR3813997, MR3451407, MR4483578,  MR4434687, MR3989749, MR4484437} for the related results for symphonic map.  We think our main theorem (\autoref{thm3. 1}\autoref{thm4. 1}) can be used to represent the  the elements in homotopy group of ellipsoid by symphonic map.

    
     The main  contribution of this work lies in establishing the ordinary differential equations that the symphonic  mappings satisfy (Lemma \ref{lem2} and Lemma \ref{dd}).  To our knowledge,  this is the first work concerning symphonic mappings that either originate from or into ellipsoids.  As for the proof of the existence part of Hopf construction,  it essentially adapts the arguments from the existing literature on harmonic mappings to the context of symphonic  mappings.

	This paper is organized as follows.  In Section \ref{cc},  we provide some preliminary lemmas that will be utilized in Sections 3 and 4.  Subsequently,  we state and prove \autoref{thm3. 1}.  In Section \ref{sec4},  we present and prove \autoref{thm4. 1}. 

	\section{Preliminary Knowledge}\label{cc}
%


We consider ellipsoid (cf.   \cite{MR1044657})
$$Q^{m_1+m_2+1}(a,  b)=\{(x, y)\in \mathbb{R}^{m_1+1}\times  \mathbb{R}^{m_2+1}\mid \frac{|x|^2}{a^2}+\frac{|y|^2}{b^2}=1 \} . $$
It is well known that $Q^{m_1+m_2+1}(a,  b)$ is not generally isometric to $\mathbb{S}^{m_1+m_2+1}$. 
It is well known that the ellipsoid $Q^{m_1+m_2+1}(a,  b)$ is parametrized by
$$
z=a\cos t  \cdot x+b \sin t \cdot y, 
$$
for $x \in \mathbb{S}^{m_1},  y \in \mathbb{S}^{m_2}, z\in Q^{m_1+m_2+1}(a,  b)$ and $0 \leq t \leq \pi / 2$.  Some literature swaps the positions of $\cos(t)$ and $\sin(t)$ (cf.  \cite[p. ~4]{MR1044657}),  which has no impact on the final conclusion. 
The induced Riemannian metric on $Q^{m_1+m_2+1}(a,  b)$ is:
\begin{equation}\lab{metric}
	g=\left(a^2 \cos ^2 t\right) g_{\mathbb{S}^{m_2}}+\left(b^2 \sin ^2 t\right) g_{\mathbb{S}^{m_1}}+h^2(t) d t^2, 
\end{equation}
where 
$$
h(t)=\left[a^2 \sin ^2 (t) +b^2 \cos ^2 (t)\right]^{1 / 2}. 
$$
Its volume density is
$$
v_g=a^{m_1} b^{m_2} \left( \cos ^{m_1} t \sin ^{m_2} t\right)  h(t) \cdot v_{\mathbb{S}^{m_1}} \cdot v_{\mathbb{S}^{m_2}}, 
$$
where $v_{\mathbb{S}^{m_1}},  v_{\mathbb{S}^{m_2}}$ are volume densities of the indicated Euclidean spheres.




We  refer to $\left(Q^{m_1+m_2+1}(a,  b),  g\right)$ as an ellipsoid join of the spheres $\mathbb{S}^{m_1}$ and $ \mathbb{S}^{m_2}$.  It should be pointed out that ellipsoid join  is slightly different from Riemannian join  defined  in \cite[Definition 1]{MR3238295}.  We keep the noation with \cite{MR3238295} partially.  Note that some coefficients in this proof differ from those in \cite{MR3238295},  owing to the different definitions . 


%
%
%
%
%
%
%
%

We recall the related results of harmonic map into ellipsoid.  
\begin{lem}[cf.  \cite{MR1044657, MR1022743}]\label{cccc}
	
	Let $ \varphi $ be an harmonic map from Riemannian manifold $ (M, g) $ into ellipsoid $Q^n(c,  d). $  Let $\Phi=i \circ \varphi$ be the composition of $\varphi$ with the canonical embedding $i$ of $Q^n(c,  d)$ into $\mathbb{R}^{n+1}$.  Then $\Phi$ is harmonic iff 
	\begin{equation}\label{df}
		\Delta_g \Phi=\left(\left|P^{-\frac{1}{2}} d \Phi\right|^2 /\left|P^{-1} \Phi\right|^2\right) P^{-1} \Phi, 
	\end{equation}
	where
	
	\begin{equation}\label{58}
		\begin{aligned}
			P=\left(\begin{array}{lllllll}
				c^2 & & & & & & 0 \\
				& \ddots & & & & & \\
				& & c^2 & & & & \\
				& & & d^2 & & & \\
				& & & & & \ddots & \\
				0 & & & & & &d^2
			\end{array}\right)
		\end{aligned}. 
	\end{equation}

	If we write $\Phi=\left(\Phi_1,  \Phi_2\right)$,  the components being the projections on the factors of the ambient space $\mathbb{R}^{n+1}$ following the join construction of $Q^n(c,  d)$,  then \cref{df} becomes the system (cf.  \cite{MR1044657, MR1022743})
	$$
	\left\{\begin{array}{l}
		\Delta \Phi_1=\left(\Lambda / c^2\right) \Phi_1 \\
		\Delta \Phi_2=\left(\Lambda / d^2\right) \Phi_2
	\end{array}\right. 
	$$
	with
	$$
	\Lambda=\frac{\left|d \Phi_1\right|^2 / c^2+\left|d \Phi_2\right|^2 / d^2}{\left|\Phi_1\right|^2 / c^4+\left|\Phi_2\right|^2 / d^4}. 
	$$
	Such harmonic maps are real analytic. 
\end{lem}

Given two maps $f_1: \mathbb{S}^{m_1} \rightarrow \mathbb{S}^{n_1}$ and $f_2: \mathbb{S}^{m_2} \rightarrow \mathbb{S}^{n_2}$,  we consider their ellipsoid join $f_1 * f_2$,  which is a map between ellipsoids
$$
\phi=f_1*f_2: Q^{m_1+m_2+1}(a,  b) \rightarrow Q^{n_1+n_2+1}(c,  d). 
$$
Indeed,  for any continuous function $\varphi:[0,  \pi / 2] \rightarrow[0,  \pi / 2]$ with $\varphi(0)=0,  \varphi(\pi / 2)=\pi / 2$,  we can define\begin{equation}\label{eq4}
	\begin{aligned}
	\phi(z)=f_1*f_2(z)=c \cos \varphi(t) \cdot f_1(x)+d\sin \varphi(t)  \cdot f_2(y), 
	\end{aligned}
\end{equation}
where  $x \in \mathbb{S}^{m_1},  y \in \mathbb{S}^{m_2}$,  $z=a\cos t  \cdot x+b \sin t \cdot y$和 $0 \leq t \leq \pi / 2$.  In the sequel of this paper ,  we all assume that $m_1,  m_2, n_1, n_2 \geq 1$ .

%

From \cref{cg},  we can get 
\begin{lem}[]
	Let $ f:(M, g)\to (N, h) $ be an symphonic map,  let $A_N(X, Y)$ be the second fundmental form of (N, h) into $ \mathbb{R}^{n} $,  then we have 
	\begin{equation*}
		\begin{aligned}
			\Delta^{f^*h}f=\left( f^*h\right)^{ij}A_N(df(e_i), df(e_j)),  
		\end{aligned}
	\end{equation*}
	where $\Delta^{f * h}$ denotes the "Laplacian with respect to $f^* h$ ",  i. e. , 
	\begin{equation}\label{cc12}
		\begin{aligned}
			\Delta^{f^* h} f^\alpha & = \frac{1}{\sqrt{\det(g)}}\sum_{i,  j=1}^m \frac{\partial}{\partial x^j}\left(\sqrt{\det(g)}\left(f^* h\right)^{i j} \frac{\partial f^\alpha}{\partial x^i}\right) \\
			\left(f^* h\right)^{i j} & =\sum_{k,  \ell} g^{i k} g^{j \ell}\left(f^* h\right)\left(e_k,  e_{\ell}\right) . 
		\end{aligned}
	\end{equation}
	
	\end{lem}\begin{rem}
	When the target manifold $ (N, h)=(\mathbb{S}^n, g_{\mathbb{S}^n}) $,  it is just the formula (5) in \cite{MR3238295}.  In the definition of $\Delta^{f * h}$ in \cite{MR3238295} ,  the author may miss the factor $\sqrt{\det(g)}$, but the result in that paper is right. 
	\end{rem}
	\begin{proof}
		Using the definition of the operator $ \operatorname{div}_g $,  it is not hard to see that the equation
		\begin{equation*}
			\begin{aligned}
				\operatorname{div}_g \sigma_f=0, 
			\end{aligned}
		\end{equation*}
		is 
		\begin{equation*}
			\begin{aligned}
				\Delta^{f^*h}f=\left( f^*h\right)^{ij}A_N(df(e_i), df(e_j)) . 
			\end{aligned}
		\end{equation*}
		One can carry over the proof in the case of harmonic map to the present case. 
	\end{proof}
\begin{lem}
	Let $\Phi=i \circ \phi$ be the composition of $\phi$ with the canonical embedding $i$ of $Q^n(c,  d)$ into $\mathbb{R}^{n+1}$.  Then $\phi$ is symphonic map iff 
	\begin{equation}\label{125}
		\begin{aligned}
			\Delta^{\Phi^{*}h} \Phi=\left(\left|P^{-1 / 2}\Phi^{*}h \right|^2 /\left|P^{-1} \Phi\right|^2\right) P^{-1} \Phi, 
		\end{aligned}
	\end{equation}
	where  the notation $ \Delta^{f^{*} h} f^{\alpha} $ and $ \left(f^{*} h\right)^{i j} $ is as in Lemma 2. 2,  the matrix is defined in \eqref{58}. 
%
If we write $\Phi=\left(\Phi_1,  \Phi_2\right)$,  the components being the projections on the factors of the ambient space $\mathbb{R}^{n+1}$ following the join construction of $Q^n(c,  d)$,  then \eqref{125} becomes the system (cf.  \cite{MR1044657, MR1022743})
\begin{equation}\label{er}
	\begin{aligned}
		\Delta^{\Phi_1^{*}h} \Phi_1=\left(\Lambda / c^2\right) \Phi_1 \\
		\Delta^{\Phi_2^{*}h} \Phi_2=\left(\Lambda / d^2\right) \Phi_2
	\end{aligned}
\end{equation}
with
$$
\Lambda=\frac{\left|\Phi_1^{*}h\right|^2 / c^2+\left|\Phi_2^{*}h\right|^2 / d^2}{\left|\Phi_1\right|^2 / c^4+\left|\Phi_2\right|^2 / d^4}. 
$$
\end{lem}

\begin{lem}[cf. 	Lemma 1 in \cite{MR3238295}]\lab{eq9}
	Let $f$ be a smooth map from the manifold $ (M, g) $ into the standard sphere $\mathbb{S}^n$ of radius 1.  Let $f(x)=\left(f^1(x),  \ldots,  f^{n+1}(x)\right) \in \mathbb{S}^n \subset$ $\mathbb{R}^{n+1}$ and  $h=g_{\mathbb{S}^n}$.  Then $f$ is symphonic if and only if it satisfies the equation
	$$
	\Delta^{f^* h} f^\alpha+\left\|f^* h\right\|^2 f^\alpha=0 \quad(\alpha=1,  \ldots,  n+1), 
	$$
	
\end{lem}
\begin{lem}
	Let $\Phi=i \circ \phi$ be the composition of $\phi$ with the canonical embedding $i$ of $Q^n(c,  d)$ into $\mathbb{R}^{n+1}$.  Then $\Phi$ is symphonic map iff 
	\begin{equation}\label{125}
		\begin{aligned}
			\Delta^{\Phi^{*}h} \Phi=\left(\left|P^{-1 / 2}\Phi^{*}h \right|^2 /\left|P^{-1} \Phi\right|^2\right) P^{-1} \Phi, 
		\end{aligned}
	\end{equation}
	where  the notation $ \Delta^{f^{*} h} f^{\alpha} $ and $ \left(f^{*} h\right)^{i j} $ is as above,  the matrix is defined in \eqref{58}. 
%
If we write $\Phi=\left(\Phi_1,  \Phi_2\right)$ as the components being the projections on the factors of the ambient space $\mathbb{R}^{n+1}$ following the join construction of $Q^n(c,  d)$,  then \eqref{125} becomes the system (cf.  \cite{MR1044657, MR1022743})
\begin{equation}\label{er}
	\begin{aligned}
		\Delta^{\Phi_1^{*}h} \Phi_1=\left(\Lambda / c^2\right) \Phi_1 \\
		\Delta^{\Phi_2^{*}h} \Phi_2=\left(\Lambda / d^2\right) \Phi_2
	\end{aligned}
\end{equation}
with
$$
\Lambda=\frac{\left|\Phi_1^{*}h\right|^2 / c^2+\left|\Phi_2^{*}h\right|^2 / d^2}{\left|\Phi_1\right|^2 / c^4+\left|\Phi_2\right|^2 / d^4}. 
$$
\end{lem}

\begin{proof}
We follow the proof of \cite[Lemma 1]{MR3238295}.  	Let $ A $  be the second fundamental form of $Q^n(c,  d)$ in $\mathbb{R}^{n+1}$,  then
$$
\begin{aligned}
	\Delta^{\Phi^{*}h} \Phi=(\Phi^{*}h)^{ij}A(d\Phi(e_i), d\Phi(e_j)), 
\end{aligned}
$$
at $ (x, y) \in Q^n(c,  d) $,  by the computation (2. 15)-(2. 16) in \cite{MR3320921},  we have 
$$
\begin{aligned}
	A((X_1, Y_1), (X_2, Y_2))|_{(x, y)}=\frac{\langle X_1, X_2\rangle / c^2+\langle Y_1, Y_2\rangle / d^2}{\left|x\right|^2 / c^4+\left|y\right|^2 / d^4}\left(\frac{x}{c^2}, \frac{y}{d^2}	\right) 
\end{aligned}
$$

Thus, at $ (x, y)=(\Phi_1, \Phi_2) $, 
\begin{equation*}
	\begin{aligned}
		&(\Phi^{*}h)^{ij}A(d\Phi(e_i), d\Phi(e_j))\\
		&=(\Phi^{*}h)^{ij}\frac{\langle d\Phi_1(e_i), d\Phi_1(e_j)\rangle / c^2+\langle d\Phi_2(e_i), d\Phi_2(e_i)\rangle / d^2}{\left|\Phi_1\right|^2 / c^4+\left|\Phi_2\right|^2 / d^4}\left(\frac{\Phi_1}{c^2}, \frac{\Phi_1}{d^2}	\right) . 
	\end{aligned}
\end{equation*}
This implies \eqref{125}. 
Thus,  if we project the above equation according  to the join construction of ellipsoid  $Q^n(c,  d)$,  it is not hard to get \eqref{er} . 
\end{proof}\begin{lem}\label{lem2}
%
%
Let	$f_{1}: \mathbb{S}^{m_{1}} \longrightarrow \mathbb{S}^{n_{1}}$, 
$f_{2}: \mathbb{S}^{m_{2}} \longrightarrow \mathbb{S}^{n_{2}}$ be two symphonic maps.  Then the following two conditions are equivalent:

(1) The join $f_{1} * f_{2}$ given by \eqref{eq4} is a symphonic map. 

(2) $ \varphi(t) $ satisfies the ODE 
\begin{equation}\label{eq1}
	\begin{split}
		&\frac{k(\alpha)^2}{h(t)^2}(\varphi^{\prime}(t)^3)^{\prime}-	\frac{k(\alpha)^2}{h(t)^2}\left(m_1 \frac{\sin t}{\cos t}-m_2 \frac{\cos t}{\sin t}-2\frac{h^\prime(t)}{h(t)}\right)\varphi^{\prime}(t)^3+ \frac{2kk^\prime}{h(t)^2}\varphi^{\prime}(t)^4\\
		+&\left(\frac{c^4}{a^4}\frac{\cos ^2 \varphi(t)}{\cos ^4 t} \|f_1^* {g_{\mathbb{S}^{n_1}}}\|^2 -\frac{d^4}{b^4}\frac{\sin ^2 \varphi(t)}{\sin ^4 t} \|f_2^*{g_{\mathbb{S}^{n_2}}}\|^2 \right)  \sin \varphi(t)\cos \varphi(t)=0. 
	\end{split}
\end{equation}
where  $h(t)=\left[a^2 \sin^2(t)+b^2 \cos^2(t)\right]^{1 / 2},  k(\varphi)=\left[c^2 \sin ^2 \varphi+d^2 \cos ^2 \varphi\right]^{1 / 2}. $

(3).  The following equation  holds：
	$$\begin{aligned}
	\dfrac{d}{dt}\left(\frac{k^4(\var)}{h^4(t)}\left(\var^\prime\right)^3\right)+\frac{k^4(\var)}{h^4(t)}\left(\var^\prime\right)^3(-m_1\tan t+m_2\cot t)-\frac{k^3(\var)k^\prime(\var)(\var^\prime)^4}{h^4(t)}\\
	+a_1\frac{\cos^3\var\sin\var}{\cos^4t}-a_2\frac{\cos\var\sin^3\var}{\sin^4t}=0. 
\end{aligned}$$
where
\begin{equation}\label{cc-1}
	a_1=\frac{c^4}{a^4}\frac{ \mathcal{F}\left(f_1\right)}{\operatorname{Vol}\left(\mathbb{S}^{m_1}\right)},  \,  a_2=\frac{d^4}{b^4}\frac{ \mathcal{F}\left(f_2\right)}{\operatorname{Vol}\left(\mathbb{S}^{m_2}\right)} . 
\end{equation}
\end{lem}
\begin{rem}
Compared with that in \cite[Theorem 1]{MR3238295}, there is an extra term in this equation. 
\end{rem}
\begin{proof} We follow that in  \cite[Lemma 2 and Lemma  3]{MR1044657}.  We choose local coordinates$\{x^i\}_{i=1}^{m_1}$ on $\mathbb{S}^{m_1}$,  local coordinates $\{y^j\}_{j=1}^{m_2}$ on $\mathbb{S}^{m_2}$.   For convenience,  we adopt the notation of equation \eqref{cc12},  	
\begin{equation*}
	\begin{aligned}
		&\Delta^{(f_1*f_2)^*g_{Q^{n_1+n_2+1}(c, d)}}	\\ &=\frac{1}{\sqrt{\operatorname{det}\left(g_{Q^{m_{1}+m_{2}+1}(a,  b)}\right)}} \frac{\partial}{\partial x^i}\big\{\sqrt{\operatorname{det}\left(g_{Q^{m_{1}+m_{2}+1}(a,  b)}\right)} \frac{1}{a^2\cos^2 t} (g_{\mathbb{S}^{m_1}})^{ia}\frac{1}{a^2\cos^2 t} (g_{\mathbb{S}^{m_1}})^{jb} c^2 \cos^2\varphi(t) \\&(f_1^{*}g_{\mathbb{S}^{n_1}})_{ab}\frac{\partial}{\partial x^j}  \big\} \\
		&+\frac{1}{\sqrt{\operatorname{det}\left(g_{Q^{m_{1}+m_{2}+1}(a,  b)}\right)}} \frac{\partial}{\partial y^k}\big\{\sqrt{\operatorname{det}\left(g_{Q^{m_{1}+m_{2}+1}(a,  b)}\right)} \frac{1}{b^2\sin^2 t} (g_{\mathbb{S}^{m_1}})^{kc}\frac{1}{b^2\sin^2 t} (g_{\mathbb{S}^{m_2}})^{ld} c^2 \sin^2\varphi(t) \\&(f_1^{*}g_{\mathbb{S}^{n_1}})_{cd}\frac{\partial}{\partial y^l}  \big\} \\
		&+\frac{1}{\sqrt{\operatorname{det}\left(g_{Q^{m_{1}+m_{2}+1}(a,  b)}\right)}} \frac{\partial}{\partial t}\big\{\sqrt{\operatorname{det}\left(g_{Q^{m_{1}+m_{2}+1}(a,  b)}\right)} \frac{1}{h(t)^2}\varphi^{\prime}(t)^2 k^2(\varphi) \frac{\partial}{\partial t}  \bigg\}. 
	\end{aligned}
\end{equation*}
Thus
\begin{equation}\label{er}
	\begin{aligned}
			&\Delta^{(f_1*f_2)^*g_{Q^{n_1+n_2+1}(c, d)}}\\
			=& \frac{c^2\cos ^{2} \varphi(t)}{a^4\cos ^{4} t} \frac{1}{\sqrt{\operatorname{det}\left(g_{\mathbb{S}^{m_{1}}}\right)}} \sum_{k,  \ell} \frac{\partial}{\partial x^{i}}\left\{\sqrt{\operatorname{det}\left(g_{\mathbb{S}^{m_{1}}}\right)}\left(f_{1}^{*} g_{\mathbb{S}^{n_{1}}}\right)^{ij} \frac{\partial}{\partial y^{j}}\right\} \\
		&+ \frac{d^2\sin ^{2} \varphi(t)}{b^4\sin ^{4} t} \frac{1}{\sqrt{\operatorname{det}\left(g_{\mathbb{S}^{m_{2}}}\right)}} \sum_{k,  \ell} \frac{\partial}{\partial y^{k}}\left\{\sqrt{\operatorname{det}\left(g_{\mathbb{S}^{m_{2}}}\right)}\left(f_{2}^{*} g_{\mathbb{S}^{n_{2}}}\right)^{k \ell} \frac{\partial}{\partial y^{\ell}}\right\} \\
		 &+\frac{1}{\left(\frac{\cos t}{r_1}\right)^{m_1}\left(\frac{\sin t}{r_2}\right)^{m_2}} \frac{\partial}{\partial t}\left\{\left(\frac{\cos t}{r_1}\right)^{m_1}\left(\frac{\sin t}{r_2}\right)^{m_2}\frac{1}{h^{2}(t)} \varphi^{\prime}(t)^2 k^2(\varphi)\frac{\partial}{\partial t}\right\} \\
		 =&\frac{c^2}{a^4} \frac{\cos ^2 \varphi(t)}{\cos ^4 t} \Delta^{f_1^* g_{\mathbb{S}^n 1}}+\frac{d^2}{b^4} \frac{\sin ^2 \varphi(t)}{\sin ^4 t} \Delta^{f_2^* g_{\mathbb{S}^{n_2} }} \\
		 &+\frac{\partial}{\partial t}\left(\frac{1}{h^2(t)}\varphi^{\prime}(t)^2 k^2(\varphi) \frac{\partial}{\partial t}\right)-\left(m_1 \frac{\sin t}{\cos t}-m_2 \frac{\cos t}{\sin t}\right)\frac{1}{h^2(t)} \varphi^{\prime}(t)^2 \frac{\partial}{\partial t} 
	\end{aligned}
\end{equation}
This formula is similar to that in \cite[Lmma  3, fomula (e)]{MR1044657}. 

On the other hand
\begin{equation}\label{ddr}
	\begin{aligned}
		 &\left\|\left(\left(f_1 * f_2\right)\right)^* g_{\mathbb{S}^{n_1+n_2+1}}\right\|^2\\
		 =&\sum_{i,  j,  k,  \ell}\left(g_{Q^{m_{1}+m_{2}+1}(a,  b)}\right)^{i k}\left(g_{Q^{m_{1}+m_{2}+1}(a,  b)}\right)^{j \ell} \left(\left(f_1 * f_2\right)^* g_{\mathbb{S}^{n_1+n_2+1}}\right)_{i j}\left(\left(f_1 * f_2\right)^* g_{\mathbb{S}^{n_1+n_2+1}}\right)_{k \ell} \\
		 =&\sum_{a,  b,  c,  d}\left(\frac{1}{a^2\cos ^2 t}\left(g_{\mathbb{S}^{m_1}}\right)^{a c}\right)\left(\frac{1}{a^2\cos ^2 t}\left(g_{\mathbb{S}^{m_1}}\right)^{b d}\right) \\
		& \times\left(\frac{}{}c^2\cos ^2 \varphi(t)\left(f_1^* g_{\mathbb{S}^{n_1}}\right)_{a b}\right)\left(c^2\cos ^2 \varphi(t)\left(f_1^* g_{\mathbb{S}^{n_1}}\right)_{c d}\right) \\
		& +\sum_{p,  q,  r,  s}\left(\frac{1}{b^2\sin ^2 t}\left(g_{\mathbb{S}^{m_2}}\right)^{p r}\right)\left(\frac{1}{b^2\sin ^2 t}\left(g_{\mathbb{S}^{m_2}}\right)^{q s}\right) \\
		& \times\left(\frac{}{}d^2\sin ^2 \varphi(t)\left(f_2^* g_{\mathbb{S}^{n_2}}\right)_{p q}\right)\left(d^2\sin ^2 \varphi(t)\left(f_2^* g_{\mathbb{S}^{n_2}}\right)_{r s}\right)+\frac{k^4(\varphi(t))}{h^4(t)}\varphi^{\prime}(t)^4 \\
		 =&\left(\frac{c^4}{a^4}\right)\left(\frac{\cos ^2 \varphi(t)}{\cos ^2 t}\right)^2\left\|f_1^* g_{\mathbb{S}^{n_1}}\right\|^2 +\left(\frac{d^4}{b^4}\right)\left(\frac{\sin ^2 \varphi(t)}{\sin ^2 t}\right)^2\left\|f_2^* g_{\mathbb{S}^n}\right\|^2+\frac{k^4(\varphi(t))}{h^4(t)}\varphi^{\prime}(t)^4 .  \\
	\end{aligned}
\end{equation}

Since  $ f_{1} * f_{2} $ is assumed to symphonic map,  we have  (cf.  \cite[Theorem 1]{MR3238295} )
\begin{equation}\label{et}
	\begin{cases}
		\begin{aligned}
			\Delta^{\left(f_{1} * f_{2}\right)^{*} g_{\mathbb{S}^{n_{1}+n_{2}+1}}}\left(\cos \varphi(t) f_{1}(x)\right)+\frac{\Lambda}{c^2} \cos \varphi(t) f_{1}(x)=0 \\
			\Delta^{\left(f_{1} * f_{2}\right)^{*} g_{\mathbb{S}^{n_{1}+n_{2}+1}}}\left(\sin \varphi(t) f_{2}(x)\right)+ \frac{\Lambda}{d^2}  \sin \varphi(t) f_{2}(x)=0
		\end{aligned}
	\end{cases}	
\end{equation}

Gathereing \eqref{er}and  \eqref{et} gives 
\begin{equation}\label{ff}
	\begin{aligned}
		&\frac{c^2\cos ^2 \varphi(t)}{a^4\cos ^4 t} \Delta^{f_1^* g_{\mathbb{S}^{n_1} }}\bigg|_{\mathbb{S}^{m_1}}\cos\left( \varphi(t) f_1\right) +\frac{d^2\sin ^2 \varphi(t)}{b^4\sin ^4 t} \Delta^{f_2^* g_{\mathbb{S}^{n_2}}}\bigg|_{\mathbb{S}^{m_2}}(\cos\varphi(t) f_1) \\
		& +\frac{\partial}{\partial t}\left(\frac{1}{h^2(t)}\varphi^{\prime}(t)^2 \frac{\partial}{\partial t}\right)(\cos\varphi(t) f_1)-\left(m_1 \frac{\sin t}{\cos t}-m_2 \frac{\cos t}{\sin t}\right)\frac{1}{h^2(t)} \varphi^{\prime}(t)^2 \frac{\partial}{\partial t}(\cos\varphi(t) f_2)  \\	
		&+\frac{\Lambda}{c^2}\cos \varphi(t) f=0. 
	\end{aligned}
\end{equation}
Meanwhile
\begin{equation}\label{ff1}
	\begin{aligned}
		&\frac{c^2\cos ^2 \varphi(t)}{a^4\cos ^4 t} \Delta^{f_1^* g_{\mathbb{S}^{n_1} }}\bigg|_{\mathbb{S}^{m_1}}\sin\left( \varphi(t) f_2\right) +\frac{d^2\sin ^2 \varphi(t)}{b^4\sin ^4 t} \Delta^{f_2^* g_{\mathbb{S}^{n_2}}}\bigg|_{\mathbb{S}^{m_2}}(\sin\varphi(t) f_2) \\
		& +\frac{\partial}{\partial t}\left(\frac{1}{h^2(t)}\varphi^{\prime}(t)^2 \frac{\partial}{\partial t}\right)(\sin\varphi(t) f_2)-\left(m_1 \frac{\sin t}{\cos t}-m_2 \frac{\cos t}{\sin t}\right)\frac{1}{h^2(t)} \varphi^{\prime}(t)^2 \frac{\partial}{\partial t}(\sin\varphi(t) f_2)\\	
		&+\frac{\Lambda}{d^2}\sin \varphi(t) f_2=0. 
	\end{aligned}
\end{equation}
Since $f_1$ and $f_2$ are both symphonic map, which sastisfied the equation in \lemref{eq9} , using the similar trick in \cite{MR3238295},   It is not hard to see that   \eqref{ff}$\cdot c^2\sin(\varphi(t)) $-\eqref{ff1}$ \cdot d^2\cos(\varphi(t))=0 $ is just
\begin{equation}\label{eq11}
	\begin{split}
		&\frac{k(\alpha)^2}{h(t)^2}(\varphi^{\prime}(t)^3)^{\prime}-	\frac{k(\alpha)^2}{h(t)^2}\left(m_1 \frac{\sin t}{\cos t}-m_2 \frac{\cos t}{\sin t}-2\frac{h^\prime(t)}{h(t)}\right)\varphi^{\prime}(t)^3+ \frac{2kk^\prime}{h(t)^2}\varphi^{\prime}(t)^4\\
		+&\left(\frac{c^4}{a^4}\frac{\cos ^2 \varphi(t)}{\cos ^4 t} \|f_1^* {g_{\mathbb{S}^{n_1}}}\|^2 -\frac{d^4}{b^4}\frac{\sin ^2 \varphi(t)}{\sin ^4 t} \|f_2^{*}g_{\mathbb{S}^{n_2}}\|^2 \right)  \sin \varphi(t)\cos \varphi(t)=0. 
	\end{split}
\end{equation}
It is hard to see that  $ c^2\cos(\varphi(t))\cdot \eqref{ff} $+$d^2\sin(\varphi(t)) \cdot\eqref{ff1} =0 $  is equivalent to \eqref{ddr}.

At last,  we  prove (3).  In fact,  let  	$$\begin{aligned}
L(\var, \var^\prime, t)=\left(\frac{k^4(\varphi(t))}{h^4(t)}\varphi^{\prime}(t)^4+a_1 \frac{\cos ^4 \varphi(t)}{\cos ^4 t}+a_2 \frac{\sin ^4 \varphi(t)}{\sin ^4 t}\right) \cos ^{m_1} t \sin ^{m_2} t
\end{aligned}$$
Computing directly 	$$\begin{aligned}
	\frac{d}{dt}\left(\frac{\partial L}{\partial \var^\prime}\right)-\frac{\partial L}{\partial \var}=0. 
\end{aligned}$$
gives (3).  In fact,  it is easy to say that formula (2) is also the critical point of $J(\var)$ defined in  \eqref{589}. 
\end{proof}

In order to apply variation method,   inspired by the work \cite{MR1044657},   we need to consider the wighted Hilbert spacew
\begin{equation}\label{space}
	X=\left\{\alpha \in W_v^{1, 4}([0,  \pi / 2],  \mathbb{R}):\|\alpha\|_{W_v^{1, 4}}^4=\int_0^{\pi / 2}\left( \dot{\alpha}^4+\alpha^4 \right) v d s<\infty\right\}	
\end{equation}
and
	$$\begin{aligned}
	X_0=\{\var\in X|0\leq \var \leq \frac{\pi}{2} , t\in(0, \frac{\pi}{2} \}
\end{aligned}$$
where $ v $ is the function defined on $ [0, \frac{\pi}{2}] $ ,  $\mathrm{W}_v^{1,  4}\left(\left(0,  \frac{\pi}{2}\right),  \mathbb{R}\right)$ denotes wighted Sobolev space.

设$ \mathcal{F}\left(f_1\right)=\int_M\norm{{f_1^* g_{\mathbb{S}^{n_1}}}}^2 $,  $ \mathcal{F}\left(f_2\right)=\int_M |f_2^* g_{\mathbb{S}^{n_2}}|^2 $.  For given function $ \varphi(t) $,  the function $h(t) , k(\varphi)$ is as in Lemma \ref{lem2},  we need to consider the following functional naturally
\begin{equation}\label{589}
	\begin{split}
		J(\varphi)=\frac{1}{4}\int_0^{\frac{\pi}{2}}\left(\frac{k^4(\varphi(t))}{h^4(t)}\varphi^{\prime}(t)^4+a_1 \frac{\cos ^4 \varphi(t)}{\cos ^4 t}+a_2 \frac{\sin ^4 \varphi(t)}{\sin ^4 t}\right) \cos ^{m_1} t \sin ^{m_2} t d t
	\end{split}
\end{equation}
where $a_1, a_2$ are given by \eqref{cc-1}. 

In fact ,  let $g$ be given in \eqref{metric},  $
g^\prime=\left(c^2 \cos ^2 s\right) g_{\mathbb{S}^{n_1}}+\left(d^2 \sin ^2 s\right) g_{\mathbb{S}^{n_2}}+k^2(s) \mathrm{d} s^2
$. 
For map $\left(Q^{m_1+m_2+1}(a,  b), g\right)\to \left(Q^{n_1+n_2+1}(c,  d), g^\prime\right) $, a simple computation shows 
	$$\begin{aligned}
J(\varphi)=\int_{Q^{m_1+m_2+1}(a,  b)}\norm{\left(f_1*f_2\right)^* h}^2	\mathrm{d} \text{v}_g
\end{aligned}$$
\begin{rem}
		If  $ m_1<4$or $m_2<4$,   $J$ is allowed to take the value $+\infty$.  If $m_1,  m_2\geq 4, a\neq 0, b\neq 0$,  since $h$ has positive lower bound,  the functional  $J$ can be defined on$X$ and smooth. 
\end{rem}
\begin{rem}
The parametrization of the ellipsoid in Section 2 differs from that in \cite[(2. 6)]{MR1044657} by an interchange of 
$\cos (t)$and $\sin (t)$.   Consequently,  the associated functional $J(\varphi)$  slightly,  though this has no substantive consequence. 
\end{rem}
For map $f_1*f_2$ defined in \eqref{eq4},  then 
\begin{equation*}
	\begin{split}
		\| d(f_1*f_2)\|^2=  \frac{c^2\cos^2\varphi(t)}{a^2\cos^2(t)} |df_1 |^2+\frac{d^2\sin^2\varphi(t)}{b^2\sin^2(t)} |df_2 |^2+\frac{k^2(\varphi(t))}{h^2(t)}\varphi^\prime(t)^2
	\end{split}
\end{equation*}

Following that of Lemma 4 in \cite{MR1044657} ,   A routine computation can give 
\begin{lem}\label{lem-9}
For the function $ \varphi(t) $ and the map $ f_1*f_2 $ given in\eqref{eq4} , 有
	\begin{equation*}
		\begin{aligned}
			F(f_1*f_2)=ab\operatorname{Vol}(\mathbb{S}^{m_1})\operatorname{Vol}(\mathbb{S}^{m_2}) J(\varphi)
		\end{aligned}
	\end{equation*}
\end{lem}

\begin{rem}
	By this Lemma,  symphonic map is also the critical poit of  $J(\varphi)$.   In order to  find the  ellipsoid join map,  just need  to find the  critical point  of $J(\varphi)$. 
\end{rem}

Similar to \cite[Lemma 2. 8]{MR1044657},  wo also have 
\begin{lem}
There exists constant  $C$ such that for $\alpha \in X$
	$$
	\left. \begin{array}{l}
		\int_0^{\pi / 2} \alpha^4(s) \sin ^{p-4} (s)\cos ^q (s) d s \\
		\int_0^{\pi / 2} \alpha^4 \sin ^p (s) \cos ^{q-4} (s )d s
	\end{array}\right\} \leq C \int_0^{\pi / 2}\left[\dot{\alpha}^4+\alpha^4\right] \sin ^p (s) \cos ^q (s) d s . 
	$$

\end{lem}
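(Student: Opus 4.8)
The plan is to recognize this as a one-dimensional weighted Hardy inequality of order four and to prove it by a localization-plus-integration-by-parts argument, following the scheme of the cited Lemma 2.8 of \cite{MR1044657} but with the quartic exponent in place of the quadratic one. First I would reduce the two displayed inequalities to a single one: the substitution $s\mapsto \pi/2-s$ interchanges $\sin$ and $\cos$ and preserves both the space $X$ and the quantity $\int(\dot\alpha^4+\alpha^4)\sin^p\cos^q$, so it carries the first inequality to the second, and it suffices to treat
\begin{equation*}
\int_0^{\pi/2}\alpha^4\sin^{p-4}s\,\cos^q s\,ds \le C\int_0^{\pi/2}\bigl[\dot\alpha^4+\alpha^4\bigr]\sin^p s\,\cos^q s\,ds .
\end{equation*}
The two weights differ only by the factor $\sin^{-4}s$, which is bounded away from $s=0$; hence on any interval $[\epsilon,\pi/2]$ the left side is dominated by $\sin^{-4}(\epsilon)$ times the right side, and the whole problem localizes near the origin. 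There, using $c\,s\le\sin s\le s$ and $\cos^q s\asymp 1$, it remains to establish the model estimate
\begin{equation*}
\int_0^{\epsilon}\alpha^4 s^{p-4}\,ds \le C\int_0^{\epsilon}\bigl[\dot\alpha^4+\alpha^4\bigr]s^{p}\,ds .
\end{equation*}

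For the model estimate I would use that $\alpha\in W^{1,4}(0,\pi/2)$ embeds into $C^{0,3/4}$, so $\alpha$ is continuous up to the endpoints and the boundary values $\alpha(0),\alpha(\epsilon)$ are meaningful. Writing $s^{p-4}=\tfrac{1}{p-3}\tfrac{d}{ds}s^{p-3}$ (valid for $p\ne 3$) and integrating by parts gives
\begin{equation*}
\int_0^{\epsilon}\alpha^4 s^{p-4}\,ds=\frac{1}{p-3}\Bigl[\alpha^4 s^{p-3}\Bigr]_0^{\epsilon}-\frac{4}{p-3}\int_0^{\epsilon}\alpha^3\dot\alpha\,s^{p-3}\,ds .
\end{equation*}
Splitting the weight as $s^{p-3}=s^{3(p-4)/4}\,s^{p/4}$ and applying H\"older with exponents $4/3$ and $4$ bounds the last integral by $I^{3/4}D^{1/4}$, where $I$ is the left side and $D=\int_0^\epsilon\dot\alpha^4 s^p\,ds$ is the Dirichlet term. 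Young's inequality then absorbs a factor $\tfrac12 I$ into the left side, leaving $I\le C\,(D+\text{boundary})$, and the boundary contribution at $s=\epsilon$ is controlled by the already-comparable region $[\epsilon,\pi/2]$. This closes the estimate.

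The hard part, and the reason the statement carries the caveat, is the boundary term at $s=0$: it equals $\tfrac{1}{p-3}\alpha(0)^4 s^{p-3}\big|_{s\to 0}$ and vanishes only when $p>3$. For $p\le 3$ the factor $s^{p-3}$ does not decay, and the model inequality genuinely fails for nonzero boundary data: a constant $\alpha$ has finite right side but infinite left side, since $s^{p-4}$ is no longer integrable at $0$. This threshold behavior is exactly what the convention that $J$ may assume the value $+\infty$ is designed to absorb, and I would make the admissible range ($p>3$, and symmetrically $q>3$) explicit so that the low exponents are precisely the excluded cases. An alternative that pins down the threshold cleanly is to invoke Muckenhoupt's characterization of the weighted Hardy inequality in $L^4$, whose balance condition reduces to the same exponent requirement; I expect the verification of this borderline, rather than the H\"older--Young absorption, to be the only genuine obstacle.
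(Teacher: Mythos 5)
Your argument is correct in substance, but it takes a genuinely different --- and far more explicit --- route than the paper, which disposes of the lemma in one sentence by appealing to ``the Sobolev inequality for the Riemannian manifolds $([0,\pi/2],\sin^p\cos^{q-4}\,ds)$ and $([0,\pi/2],\sin^{p-4}\cos^q\,ds)$,'' without identifying which inequality is meant or checking its hypotheses. What you supply is the actual content behind that citation: the symmetry reduction $s\mapsto\pi/2-s$, localization to $s=0$, and the weighted Hardy argument (integration by parts, H\"older with exponents $4/3$ and $4$, Young absorption), which is precisely the quartic analogue of the proof of Lemma~2.8 in the cited Eells--Ratto paper. Two small technical points you should make explicit when writing this up: the absorption step $I\le \tfrac12 I + CD$ is only legitimate once you know $I<\infty$, so you should first run the computation on $[\delta,\epsilon]$ and let $\delta\to0$; and the boundary term at $s=\epsilon$ is handled by the one-dimensional trace bound $|\alpha(\epsilon)|^4\le C\int_\epsilon^{a}(|\dot\alpha|^4+|\alpha|^4)\,ds$ on a region where the weight is bounded below.

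The most valuable part of your proposal is the diagnosis of the admissible range. You are right that the integration by parts requires $p>3$ (symmetrically $q>3$) for the boundary term $\alpha(0)^4 s^{p-3}$ to vanish, and that for $p\le 3$ the inequality genuinely fails: a nonzero constant $\alpha$ lies in $X$, has finite right-hand side, but $\int_0^{\epsilon}s^{p-4}\,ds=+\infty$. The paper's caveat ``if either $p=1$ or $q=1$\,'' is carried over verbatim from the quadratic setting of Eells--Ratto, where the threshold is $p>1$; for the quartic functional $J$ used here the correct threshold is $p,q>3$, and the subsequent sentence ``for $p,q>1$ the functional $J$ is defined and smooth on $X$'' inherits the same miscalibration. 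This is a substantive correction to the statement, not merely to its proof, and you should state the lemma with the hypothesis $p,q>3$ (or reformulate which exponents force $J\equiv+\infty$ on part of $X$).
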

\begin{proof}
This follows from Sobolev inequality on ([0, $\frac{\pi}{2}$],  $ \sin^p(s) \cos^{q-4}(s) ds $) and ([0, $\frac{\pi}{2}$],  $ \sin^{p-4} (s)\cos^{q}(s) ds $).   
\end{proof}

\begin{lem}\label{33}
	There exists minimizer  $\widehat{\varphi}(\mathrm{t})$ of $J(\varphi)$ in $X_0$ which $\widehat{\varphi}(\mathrm{t})$ satisifies Euler-Lagrange equation.  (\ref{eq1}). 
\end{lem}

\begin{proof}


We follow that of \cite{MR3238295}.   Take a minimizing sequence $\varphi_i$ in $X$,    we get a weak limit $\varphi_{\infty}$.  Since the functional $J$ is lower semicontinuous,  $\varphi_{\infty}$ is a minimizer in $X$ .  By Ding's method (cf.  Lemma 2. 1 in \cite{MR962492}),   we get a weak limit   $\varphi_{\infty}$ which  is also a minimizer in $X_0$ and therefore $\varphi_{\infty}$ solves (\ref{eq1}) ,  set $\hat{\varphi}=\varphi_{\infty}$ and  the proof is completed. 
\end{proof}
\begin{rem}
	Here the constant solutions  $\widehat{\varphi}(\mathrm{t})\equiv 0 $ or $\widehat{\varphi}(\mathrm{t})\equiv \frac{\pi}{2}$ is not excluded(cf. \cite{MR962492}).  
\end{rem}

Furthermore,  we give sufficient condition under which there exists different solution of (\ref{eq1}) from $\widehat{\varphi}(\mathrm{t})\equiv 0 $ or $\widehat{\varphi}(\mathrm{t})\equiv \frac{\pi}{2}. $.  

Set $c_0:=\inf \{J(\var)| \var\in X,  0\leq \var \leq \frac{\pi}{2} , t\in(0, \frac{\pi}{2})\}$
\begin{cor}\label{r-1}
If $c_0< \min\{J(0), J(\frac{\pi}{2})\}$,  then $\widehat{\varphi}(\mathrm{t})$ satisfies  Euler-Lagrange equation $(\cref{eq2})$
\end{cor}
A routine computation shows that 
\begin{lem}\label{r-2}  
	The first variation  of $J(\varphi)$ in the    direction  $\xi$ is
	$$\begin{aligned}
		dJ(\varphi)(\xi)&\\
		=& 4\int_{0}^{\frac{\pi}{2}}\left( \frac{k^3(\varphi(t))\left(\varphi^\prime(t)\right)^4 \xi+k^4(\varphi(t))(\varphi^\prime(t))^3\xi^\prime}{h^4}\right. \\
		&\left. +\left( a_2 \frac{\sin^3(\varphi)\cos(\varphi)}{\sin^4(t)}-a_1 \frac{\cos^3(\varphi)\sin(\varphi)}{\cos^4(t)}\right)\xi\right) \cos ^{m_1} t \sin ^{m_2} t \mathrm{d}t 
	\end{aligned}$$
\end{lem}

\begin{lem}\label{r-3} The second variation  of $J(\varphi)$ in the    direction $\xi$	
	$$\begin{aligned}
		\nabla^2J(\varphi)(\xi, \xi)=&\int_{0}^{\frac{\pi}{2}} \left\{ \frac{\left(3k(\var)(\var^\prime)^4\xi^2+4k^3(\var)\left(\var^\prime\right)^3 \xi\xi^\prime\right) +4k^3(\var) (\var^\prime)^3\xi\xi^\prime+3k(\var)^4 (\var^\prime)^2\left(\xi^\prime\right)^2}{h^4}\right. \\
		&+a_2\frac{3\sin
			^2\var\cos^2 \var -\sin^4 \var}{\sin^4 t}\xi^2\\
		&\left. -a_1 \frac{-3\cos^2\var\sin^2\var +\cos^4\var }{\cos^4 t} \xi^2 \right\} \cos ^{m_1} t \sin ^{m_2} t  \mathrm{d}t
	\end{aligned}$$

\end{lem}

	\begin{proof}A direct computation shows that the second variation  of $J(\varphi)$ in the    direction  $\eta, \xi$  is
	$$\begin{aligned}
		\nabla^2J(\varphi)(\eta, \xi)=&\int_{0}^{\frac{\pi}{2}} \left\{ \frac{\left(3k(\var)\eta(\var^\prime)^4+4k^3(\var)\left(\var^\prime\right)^3 \eta^\prime\right) \xi+4k^3(\var)\eta (\var^\prime)^2\eta^\prime\xi^\prime}{h^4}\right. \\
		&+a_2\frac{3\sin
			^2\var\cos^2 \var \eta-\sin^4 \var \eta}{\sin^4 t}\xi\\
		&\left. -a_1 \frac{-3\cos^2\var\sin^2\var \eta+\cos^4\var \eta}{\cos^4 t} \xi \right\} \cos ^{m_1} t \sin ^{m_2} t  \mathrm{d}t
	\end{aligned}$$
\end{proof}

\begin{cor}
If $J(0)\leq J(\frac{\pi}{2})$,  then $c_0 <J(0)$ ,  thus  there exists nonconstant solution of  (\cref{eq1}) in $X_0$. 
\end{cor}

\begin{proof}
	$$\begin{aligned}
	I(\xi)=	\nabla^2J(0)(\xi, \xi)=\int_{0}^{\frac{\pi}{2}}\left(-a_1 \frac{1 }{\cos^4 t} \xi^2 \right) \cos ^{m_1} t \sin ^{m_2} t dt 
\end{aligned}$$
Obviously,  there exists function $\xi\geq 0$ in $X$ such that $I(\xi)<0$.  A similar arugument in (cf.   \cite[Lemma 2. 3]{MR962492}) shows that $c_0 <J(0). $
\end{proof}

\begin{lem}\label{r-4}For $p\geq 4, q\geq 4$,  then
	
	$$\begin{aligned}
	\int_{0}^{\frac{\pi}{2}} \sin^p s \cos^{q-4}s ds= \frac{(p-1)(p-3)}{(q-1)(q-3) }\int_{0}^{\frac{\pi}{2}} \sin^{p-4} s \cos^{q}s ds
\end{aligned}$$

\end{lem}

\begin{lem}\label{r-5}If   $a_1=\frac{c^4}{a^4}\frac{ \mathcal{F}\left(f_1\right)}{\operatorname{Vol}\left(\mathbb{S}^{m_1}\left(r_1\right)\right)},  \,  a_2=\frac{d^4}{b^4}\frac{ \mathcal{F}\left(f_2\right)}{\operatorname{Vol}\left(\mathbb{S}^{m_2}\left(r_2\right)\right)}, a_2\leq \frac{(m_2-1)(m_2-3)}{(m_1-5)(m_1-4)}a_1$,  
	 then  $J(0)\leq J(\frac{\pi}{2})$
\end{lem}
\begin{proof}
		$$\begin{aligned}
		J(\frac{\pi}{2})-J(0)=\frac{1}{4}\left(a_2-\frac{(m_2-1)(m_2-3)}{(m_1-5)(m_1-4)}a_1\right)\int_{0}^{\frac{\pi}{2}}\sin^{m_2-4}t\cos^{m_1}(t)dt
	\end{aligned}$$
\end{proof}

\begin{cor}
If  $a_1=\frac{c^4}{a^4}\frac{ \mathcal{F}\left(f_1\right)}{\operatorname{Vol}\left(\mathbb{S}^{m_1}\left(r_1\right)\right)},  \,  a_2=\frac{d^4}{b^4}\frac{ \mathcal{F}\left(f_2\right)}{\operatorname{Vol}\left(\mathbb{S}^{m_2}\left(r_2\right)\right)}, a_2\leq \frac{(m_2-1)(m_2-3)}{(m_1-5)(m_1-4)}a_1$,  then  there exists nonconstant solution of (\cref{eq2}) in $X_0$. 
\end{cor}

Similar to the case of harmonic map, we also need to consider the regularity of $\hat{\varphi}$. 

\begin{lem}\label{ccc}
	 $\widehat{\varphi} \in \mathrm{C}^{\infty}\left(\left(0,  \frac{\pi}{2}\right)\right)$
\end{lem}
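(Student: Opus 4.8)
The plan is to work on an arbitrary compact subinterval $[\delta,\pi/2-\delta]\subset(0,\pi/2)$, on which all the coefficients occurring in \eqref{eq1} — the functions $h(t)$, $k(\hat\varphi)$, $\cos t$, $\sin t$ and the weight $w(t):=\cos^{m_1}t\sin^{m_2}t$ — are smooth and bounded above and below by positive constants, so that the poles $t=0,\pi/2$ play no role. The organizing observation, and the content of the preceding remark, is that \eqref{eq1} is really a \emph{first-order} law for the momentum
\[
P:=\partial_{\varphi'}\mathcal L=\frac{4k^4(\hat\varphi)}{h^4}\,(\hat\varphi')^{3}\,w,\qquad \mathcal L(t,\varphi,\varphi')=\Big(\tfrac{k^4(\varphi)}{h^4}(\varphi')^4+a_1\tfrac{\cos^4\varphi}{\cos^4 t}+a_2\tfrac{\sin^4\varphi}{\sin^4 t}\Big)w,
\]
namely the weak Euler--Lagrange identity $\int P\,\eta'=\int(\partial_\varphi\mathcal L)\,\eta$, i.e. $P'=\partial_\varphi\mathcal L$ distributionally. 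I would build the regularity ladder on this pair $(\hat\varphi,P)$.

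First, since $\hat\varphi\in W^{1,4}$ we already have $\hat\varphi\in C^0$ by Sobolev embedding, hence $P\in L^{4/3}_{\mathrm{loc}}$; moreover $\partial_\varphi\mathcal L\in L^1_{\mathrm{loc}}$, since its only unbounded piece is the $(\hat\varphi')^4$-term, which is globally integrable, the remaining terms being bounded on compact subintervals. Thus $P\in W^{1,1}_{\mathrm{loc}}$, so $P$ is absolutely continuous on $[\delta,\pi/2-\delta]$. Inverting the algebraic relation gives $\hat\varphi'=\big(h^4P/(4k^4(\hat\varphi)w)\big)^{1/3}$, and since the cube root is continuous we get $\hat\varphi'\in C^0$, i.e. $\hat\varphi\in C^1_{\mathrm{loc}}$. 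Now $\partial_\varphi\mathcal L(t,\hat\varphi,\hat\varphi')$ is continuous, so $P\in C^1_{\mathrm{loc}}$; and on the open set $\Omega:=\{\,t:\hat\varphi'(t)\neq0\,\}$ the cube root is smooth, so $(\hat\varphi,P)$ solves a first-order ODE system with $C^\infty$ right-hand side. Standard ODE regularity then upgrades $\hat\varphi$ to $C^\infty(\Omega)$, and the iteration closes once $\Omega=(0,\pi/2)$.

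The main obstacle is the \emph{interior degeneracy} at the zero set $Z:=\{t:\hat\varphi'(t)=0\}$. This is precisely what separates the symphonic (quartic) problem from the harmonic one: here the coefficient of $\hat\varphi''$ in \eqref{eq1} is $3\tfrac{k^2}{h^2}(\hat\varphi')^{2}$, which vanishes on $Z$, and correspondingly the map $P\mapsto\hat\varphi'=(\,\cdot\,)^{1/3}$ fails to be differentiable where $P=0$; in the harmonic case the interior equation is non-degenerate and interior smoothness is automatic. Concretely, if $P$ has a simple zero at an interior $t_0$ then $\hat\varphi'\sim c\,(t-t_0)^{1/3}$ and $\hat\varphi$ is merely $C^1$, not $C^2$, near $t_0$. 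So the crux is to rule out interior zeros of $\hat\varphi'$.

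I would resolve this by proving $\hat\varphi'>0$ on $(0,\pi/2)$. First take the minimizer monotone non-decreasing (a truncation/comparison argument shows a monotone competitor does not raise $J$), so $\hat\varphi'\geq0$ and hence $P\geq0$. If $\hat\varphi'(t_0)=0$ at an interior $t_0$, then $t_0$ is an interior minimum of the non-negative $C^1$ function $P$, forcing $P'(t_0)=0$; but $P'(t_0)=\partial_\varphi\mathcal L(t_0,\hat\varphi(t_0),0)$, and with $\hat\varphi'=0$ the $(\hat\varphi')^4$-term drops, so this equals $4\big(a_2\tfrac{\sin^2\hat\varphi}{\sin^4 t}-a_1\tfrac{\cos^2\hat\varphi}{\cos^4 t}\big)\sin\hat\varphi\cos\hat\varphi\,w$ at $t_0$. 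Hence an interior zero can occur only where $\hat\varphi(t_0)\in\{0,\pi/2\}$ (a possible \emph{sticking} at a pole) or on the exceptional curve where that bracket vanishes. I expect excluding these two possibilities — showing the minimizer neither sticks at $0$ or $\pi/2$ on a subinterval nor makes tangential interior contact with the degenerate set $\{P=0\}$ — to be the genuinely delicate point; it should be handled by ODE uniqueness away from the degeneracy together with a barrier/comparison argument near it (a solution touching the exceptional curve must coincide with the corresponding special solution, which violates the boundary data $\hat\varphi(0)=0,\ \hat\varphi(\pi/2)=\pi/2$). Once $\hat\varphi'>0$ throughout, $\Omega=(0,\pi/2)$ and the ladder of the second paragraph yields $\hat\varphi\in C^\infty((0,\pi/2))$.
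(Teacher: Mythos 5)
Your first two paragraphs follow essentially the same route as the paper's own proof: the paper sets $\psi(t)=k^2(\hat\varphi)(\hat\varphi'(t))^3$ (your momentum $P$ differs from this $\psi$ only by the weight $\cos^{m_1}t\sin^{m_2}t$ and smooth positive factors), rewrites \eqref{eq1} as a first-order equation $\psi'+A\psi+B=0$, shows $\psi\in C^0$ by integrating and using $\hat\varphi\in W^{1,4}$ together with local integrability of $A$ and $B$ on compact subintervals, and then invokes a bootstrap on $\psi'=-A\psi-B$. Up to and including the conclusion that $\hat\varphi\in C^1_{\mathrm{loc}}$ and the momentum is $C^1_{\mathrm{loc}}$, you and the paper are doing the same thing.

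Where you diverge is that you explicitly flag the degeneracy of the bootstrap on the zero set of $\hat\varphi'$: since $\hat\varphi'=\bigl(h^4P/(4k^4w)\bigr)^{1/3}$ and the cube root is not differentiable at the origin, regularity of $P$ does not transfer to $\hat\varphi$ at points where $\hat\varphi'=0$; a simple interior zero of $P$ would give $\hat\varphi'\sim c\,(t-t_0)^{1/3}$, so $\hat\varphi$ would fail to be $C^2$ there. The paper's proof is silent on this --- its ``bootstrap argument'' works verbatim only on the open set where $\hat\varphi'\neq0$ --- so you have identified a real obstruction rather than invented one. However, your resolution of it is not a proof: the monotone-rearrangement claim (that a monotone competitor does not increase $J$), the exclusion of sticking at the values $0$ and $\pi/2$, and the barrier/comparison argument near the exceptional curve where the zeroth-order bracket vanishes are all asserted or sketched, and you acknowledge this is ``the genuinely delicate point.'' As written, your proposal therefore reduces the lemma to the unproved claim $\hat\varphi'>0$ on $(0,\pi/2)$; some such statement (or another device for crossing the degenerate set) is exactly what is needed to close the argument, both in your write-up and in the paper's.
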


\begin{proof}
	  Usingsobolev embedding  of  $\mathrm{W}^{1, 4}\left(\left(0,  \frac{\pi}{2}\right)\right)$  into $\mathrm{C}^0\left(\left[0,  \frac{\pi}{2}\right]\right)$,  we  can get  $\widehat{\varphi}(t) \in C^0\left(\left[0,  \frac{\pi}{2}\right]\right)$. 
set $\psi(t)=k^2(\widehat{\varphi})\left(\widehat{\varphi}^{\prime}(t)\right)^3$.  The functon $h(t) , k(\varphi)$ is as in Lemma \ref{lem2}.  By Lemma \ref{lem2}, 可得
$$
\psi^{\prime}(t)+A(t) \psi(t)+B(t)=0
$$
where
$$
\begin{aligned}
	\begin{cases}
		& A(t)=\frac{k(\alpha)^2}{h(t)^2}\left(m_1 \frac{\sin t}{\cos t}-m_2 \frac{\cos t}{\sin t}-2\frac{h^\prime(t)}{h(t)}\right) \\
		& B(t)=-\sin \widehat{\varphi}(t) \cos \widehat{\varphi}(t)\left(\frac{d^4}{b^4}\frac{\sin ^2 \widehat{\varphi}(t)}{\sin ^4 t}  \|f_1^* g_{g^n 1}\|^2-\frac{c^4}{a^4}\frac{\cos ^2 \widehat{\varphi}(t)}{\cos ^4 t}\| f_2^* g_{s^2 2(t)} \|^2\right) .  \\
	\end{cases}	
\end{aligned}
$$

 Noticing that  $\hat{\varphi} \in \mathrm{W}_v^{1, 4}\left(0,  \frac{\pi}{2}\right)$,   $\int_{t_0-\varepsilon}^{t_0+\varepsilon}|A(t)|^4 d t<\infty$ ,  $\int_{t_0-\varepsilon}^{t_0+\varepsilon}|B(t)|^4 d t<\infty$. 
For fixed $t_0 \in\left(0,  \frac{\pi}{2}\right)$,  $t \in\left(0,  \frac{\pi}{2}\right)$,  as in \cite{MR3238295},  we get
$$
\begin{aligned}
	\left|\psi(t)-\psi\left(t_0\right)\right| & =\left|\int_{r_0}^t \psi^\prime(t) d t\right| \\
	& \leq\left\{\int_{t_0}^t|A(t)|^4 d t\right\}^{\frac{1}{4}}\left\{\int_{t_0}^t\left|\hat{\varphi}^{\prime}(t)\right|^4 d t\right\}^{\frac{3}{4}}|+| \int_{t_0}^t|B(t)| d t \mid \\
	& \longrightarrow 0 \text { as } t \longrightarrow t_0 . 
\end{aligned}
$$
 Therefore  $\psi \in \mathrm{C}^0\left(\left(0,  \frac{\pi}{2}\right)\right)$.   Bootstrab argument shows that 
$$
\psi^{\prime}(t)=-A(t) \psi(t)-B(t)
$$
If follows from here  $\hat{\varphi}(t)$ is of $\mathrm{C}^{\infty}$.

\end{proof} 
The above lemmas imply the existence of the join of maps.  Just as in \cite{MR3238295},   we directly  get 
\begin{thm}\label{thm3. 1}
	For any rwo symphionic maps
	$$
	\begin{aligned}
		& f_1: \mathbb{S}^{m_1}\left(r_1\right) \longrightarrow \mathbb{S}^{n_1}\\
		& f_2: \mathbb{S}^{\mathrm{m}_2}\left(r_2\right) \longrightarrow \mathbb{S}^{\mathrm{n}_2}
	\end{aligned}
	$$
	there exists a symphonic ellipsoid join
	$$
	f_1 * f_2: Q^{m_1+m_2+1}(a,  b)  \longrightarrow Q^{n_1+n_2+1}(a,  b)
	$$
	ie. ,  a symphonic map which is a  ellipsoid join of maps $f_1$ andid $f_2$. 
\end{thm}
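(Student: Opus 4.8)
The plan is to assemble the three ingredients already established: the reduction of the symphonic system to the single first-order ODE \eqref{eq1} furnished by Theorem \ref{lem2}, the existence of a minimizer $\hat\varphi$ of the reduced functional $J$ with the correct boundary data provided by Lemma \ref{33}, and the interior smoothness of that minimizer obtained in Lemma \ref{ccc}. Concretely, I would first fix the function $\hat\varphi$ of Lemma \ref{33}: it lies in $W^{1,4}((0,\pi/2))$, satisfies $\hat\varphi(0)=0$, $\hat\varphi(\pi/2)=\pi/2$ and $0\le\hat\varphi\le\pi/2$, and solves \eqref{eq1}. By Lemma \ref{ccc} together with the Sobolev embedding $\mathrm{L}^{1,4}((0,\pi/2))\hookrightarrow C^0([0,\pi/2])$, one has $\hat\varphi\in C^0([0,\pi/2])\cap C^\infty((0,\pi/2))$.

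Next I would \emph{define} the candidate map by formula \eqref{eq4} with $\varphi=\hat\varphi$, namely
\[
(f_1 * f_2)(z) = c\cos\hat\varphi(t)\, f_1(x) + d\sin\hat\varphi(t)\, f_2(y),
\qquad z = a\cos t\, x + b\sin t\, y,
\]
and check that it is a well-defined continuous map from $Q^{m_1+m_2+1}(a,b)$ into $Q^{n_1+n_2+1}(c,d)$. The point is that the parametrization of the domain degenerates at $t=0$, where the $\mathbb{S}^{m_2}$ factor collapses and $z=ax$, and at $t=\pi/2$, where the $\mathbb{S}^{m_1}$ factor collapses and $z=by$; the boundary conditions $\hat\varphi(0)=0$ and $\hat\varphi(\pi/2)=\pi/2$ are exactly what forces the formula to be independent of the collapsing variable there (it equals $c f_1(x)$ at $t=0$ and $d f_2(y)$ at $t=\pi/2$), so that $f_1 * f_2$ descends to a single-valued continuous map on the whole closed ellipsoid. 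On the open stratum $0<t<\pi/2$ the map is smooth because $\hat\varphi$ is, and since $\hat\varphi$ solves \eqref{eq1}, the equivalence in Theorem \ref{lem2} shows that $f_1 * f_2$ satisfies the symphonic system \eqref{125} there.

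The genuinely delicate step, and the one I expect to be the main obstacle, is the passage to the two poles $t=0,\pi/2$: I must upgrade ``symphonic and smooth on the open stratum, continuous up to the boundary'' to ``a bona fide smooth symphonic map on all of $Q^{m_1+m_2+1}(a,b)$''. The difficulty is that the reduced functional carries the degenerate weight $\cos^{m_1}t\,\sin^{m_2}t$, and the weighted Sobolev inequality of the preceding lemma controls only integral quantities, so continuity at the poles comes for free while higher regularity does not. I would handle this by an asymptotic analysis of $\hat\varphi$ near each endpoint: using the ODE \eqref{eq1} together with the fact that the eigenmaps $f_1,f_2$ have constant pullback norms $\|f_i^*g\|^2$ (as required for the equivariant reduction), one shows that $\hat\varphi(t)$ is asymptotically linear in $t$ near $0$ and in $\tfrac{\pi}{2}-t$ near $\pi/2$, with slopes matching the local polar model on the target ellipsoid. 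This matching guarantees that $f_1 * f_2$, written in the smooth coordinates of the target near its poles, extends smoothly across the singular fibers; since the exceptional set $\{t=0\}\cup\{t=\pi/2\}$ has measure zero and the map has finite energy, a removable-singularity argument then promotes the interior solution of \eqref{125} to a global smooth symphonic map. Once this endpoint regularity is secured, the theorem follows, for $f_1 * f_2$ is by construction a join of $f_1$ and $f_2$.
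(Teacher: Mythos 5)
Your proposal is correct and follows essentially the same route as the paper, which simply assembles Theorem \ref{lem2} (reduction to the ODE \eqref{eq1}), Lemma \ref{33} (existence of the minimizer $\hat\varphi$ with the boundary conditions), and Lemma \ref{ccc} (interior smoothness), and then asserts the theorem with the single sentence ``the above lemmas imply the existence of the join of maps.'' In fact you are more careful than the paper: the endpoint analysis at $t=0,\pi/2$ that you identify as the delicate step is not addressed in the paper at all, so your discussion of why $\hat\varphi(0)=0$, $\hat\varphi(\pi/2)=\pi/2$ make the map well defined across the collapsing fibers, and of the need for asymptotic control of $\hat\varphi$ near the poles, is a genuine addition rather than a deviation.
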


\section{Hopf construction of symphonic map}\label{sec4}


We consider the special ellipsoid $ N=\{(x_1, x_2, \cdots, x_{n+1})\in \mathbb{R}^{n+1}| \frac{x_1^2+x_2^2+\cdots x_n^2}{c^2}+\frac{x_{n+1}^2}{d^2}=1\} $,  $Q^{m_{1}+m_{2}+1}(a,  b)$ is given in section 2.   For map $f: \mathbb{S}^{m_{1}} \times \mathbb{S}^{m_{2}} \rightarrow \mathbb{S}^{n-1},  $the associated Hopf map (cf. \cite{MR1014467}) $ h_{f}:Q^{m_{1}+m_{2}+1}(a,  b) \rightarrow N$ ,  is defined as 
\begin{equation}\label{eq3}
	\begin{aligned}
		h_{f}\left(ax_{1} \cos t,  bx_{2} \sin t\right):=\left(cf\left(x_{1},  x_{2}\right) \cos \varphi(t),  d\sin \varphi(t)\right)
	\end{aligned}
\end{equation}
where $x \in \mathbb{S}^{m_{1}+m_{2}-1}$ can be parametrized by  $\left(x_{1} \sin s,  x_{2} \cos s\right)$ ,  $x_{1} \in \mathbb{S}^{m_{1}},  x_{2} \in \mathbb{S}^{m_{2}},  t \in[0,  \pi / 2]$.  

The metric $g$ on the ellipsoind $Q^{m_{1}+m_{2}+1}(a,  b)$ is
$$
g=\left(a^2 \cos ^2 t\right) g_{\mathbb{S}^{m_1}}+\left(b^2 \sin ^2 t\right) g_{\mathbb{S}^{m_2}}+h^2(t) d t^2
$$
the metric $g_N$ on the ellipsoind  $N$ is
$$
g_N=\left(c^2 \cos ^2 s\right) g_{\mathbb{S}^{n-1}}+\left(d^2 \sin ^2 s\right) g_{\mathbb{S}^{1}}+k^2(s) d s^2
$$
where
 $$
h(t)=\left[a^2 \sin ^2 (t)+b^2 \cos ^2 (t)\right]^{1 / 2},  k(s)=\left[c^2 \sin ^2 (s)+d^2 \cos ^2 (s)\right]^{1 / 2}
$$
$g_{\mathbb{S}^{m}}$is  the standard metric on the unit sphere $\mathbb{S}^{m}$. 

\begin{lem}\label{dd}
Given a map $f: \mathbb{S}^{m_{1}} \times \mathbb{S}^{m_{2}} \rightarrow \mathbb{S}^{n-1}$,  the associated Hopf map  $ h_{f}:Q^{m_{1}+m_{2}+1}(a,  b) \rightarrow  N $ is  given by \eqref{eq3}.  Assume that  $ f_1(\cdot)= f(\cdot, y), f_2(\cdot)= f(x, \cdot) $,  where $ x \in \mathbb{S}^{m_{1}},  y \in \mathbb{S}^{m_{2}} $.  Assume that $ f_1 $ and $ f_2 $ are both symphonic map.   Then the following two conditions are equivalent:

(1) The join $h_f$ is a symphonic map. 

(2) $ \varphi(t) $ satisfies the ordinary differential equation, 
%
%
	\begin{equation}\label{eq2}
		\begin{split}
			&\frac{k^2(\varphi)}{h(t)^2}(\varphi^{\prime}(t)^3)^{\prime}-	\frac{k^2(\varphi)}{h(t)^2}\left(m_1 \frac{\sin t}{\cos t}-m_2 \frac{\cos t}{\sin t}-2\frac{h^\prime(t)}{h(t)}\right)\varphi^{\prime}(t)^3+ \frac{2kk^\prime}{h(t)^2}\varphi^{\prime}(t)^4\\
			+	&\left(\frac{c^4\cos ^2 \varphi(t)}{a^4\cos ^4 t} \|f_1^* {g_{\mathbb{S}^{n-1}}}\|^2 +\frac{c^4\cos ^2 \varphi(t)}{b^4 \sin ^4 t} \|f_2^*{g_{\mathbb{S}^{n-1}}}\|^2 \right)  \sin \varphi(t)\cos \varphi(t)=0
		\end{split}
	\end{equation}
	
	(3).  The following equation  holds：
		$$\begin{aligned}			\frac{d}{dt}\left(\frac{k^4(\var)}{h^4(t)}\left(\var^\prime\right)^3\right)-\frac{k^3(\var)k^\prime(\var)(\var^\prime)^4}{h^4(t)}-\left(\frac{a_1}{\cos^4 t}+\frac{a_2}{\sin^4 t}\right)\sin^3(\var) \cos(\var)
	\end{aligned}$$
	where
\begin{equation}\label{cc-1}
		 a_1=\frac{c^4 \mathcal{F}\left(f_1\right)}{a^4\operatorname{Vol}\left(\mathbb{S}^{m_1}\right)},  a_2=\frac{c^4 \mathcal{F}\left(f_2\right)}{b^4\operatorname{Vol}\left(\mathbb{S}^{m_2}\right)} . 
\end{equation}
\end{lem}
\begin{proof}We choose local coordinates$\{x^i\}_{i=1}^{m_1}$ on $\mathbb{S}^{m_1}$,  local coordinates $\{y^j\}_{j=1}^{m_2}$ on $\mathbb{S}^{m_2}$.   We keep the same with the notation in \eqref{cc12},  we have 
\begin{equation}
	\begin{aligned}
			&\Delta^{h_f^*g_{N}}	\\ &=\frac{1}{\sqrt{\operatorname{det}\left(g_{Q^{m_{1}+m_{2}+1}(a,  b)}\right)}} \frac{\partial}{\partial x_i}\big\{\sqrt{\operatorname{det}\left(g_{Q^{m_{1}+m_{2}+1}(a,  b)}\right)} \frac{1}{a^2\cos^2 t} (g_{\mathbb{S}^{m_1}})^{ia}\frac{1}{a^2\cos^2 t} (g_{\mathbb{S}^{m_1}})^{jb} c^2 \cos^2\varphi(t) \\&(f^{*}g_{\mathbb{S}^{n-1}})_{ab}\frac{\partial}{\partial x^j}  \big\} \\
		&+\frac{1}{\sqrt{\operatorname{det}\left(g_{Q^{m_{1}+m_{2}+1}(a,  b)}\right)}} \frac{\partial}{\partial y^j}\big\{\sqrt{\operatorname{det}\left(g_{Q^{m_{1}+m_{2}+1}(a,  b)}\right)} \frac{1}{b^2\sin^2 t} (g_{\mathbb{S}^{m_2}})^{kc}\frac{1}{b^2\sin^2 t} (g_{\mathbb{S}^{m_2}})^{ld} c^2 \cos^2\varphi(t) \\&(f^{*}g_{\mathbb{S}^{n-1}})_{cd}\frac{\partial}{\partial y^j}  \big\} \\
		&+\frac{1}{\sqrt{\operatorname{det}\left(g_{Q^{m_{1}+m_{2}+1}(a,  b)}\right)}} \frac{\partial}{\partial t}\big\{\sqrt{\operatorname{det}\left(g_{Q^{m_{1}+m_{2}+1}(a,  b)}\right)} \frac{1}{h(t)^2}\varphi^{\prime}(t)^2  \frac{\partial}{\partial t}  \bigg\}
	\end{aligned}
\end{equation}
因此
\begin{equation}\label{}
	\begin{aligned}
			\Delta^{h_f^*g_{N}}	
		=& \frac{c^2\cos ^{2} \varphi(t)}{a^4\cos ^{4} t} \frac{1}{\sqrt{\operatorname{det}\left(g_{\mathbb{S}^{m_{1}}}\right)}} \sum_{k,  \ell} \frac{\partial}{\partial x^{i}}\left\{\sqrt{\operatorname{det}\left(g_{\mathbb{S}^{m_{1}}}\right)}\left(f^{*} g_{\mathbb{S}^{n-1}}\right)^{ij} \frac{\partial}{\partial x^{j}}\right\} \\
		&+ \frac{c^2\cos ^{2} \varphi(t)}{b^4\sin ^{4} t} \frac{1}{\sqrt{\operatorname{det}\left(g_{\mathbb{S}^{m_{2}}}\right)}} \sum_{k,  \ell} \frac{\partial}{\partial y^{k}}\left\{\sqrt{\operatorname{det}\left(g_{\mathbb{S}^{m_{2}}}\right)}\left(f^{*} g_{\mathbb{S}^{n-1}}\right)^{k \ell} \frac{\partial}{\partial y^{\ell}}\right\} \\
		& +\frac{1}{\left(a\cos t\right)^{m_1}\left(b\sin t\right)^{m_2}} \frac{\partial}{\partial t}\left\{\left(a\cos t\right)^{m_1}\left(b\sin t\right)^{m_2}\frac{1}{h^{2}(t)} \varphi^{\prime}(t)^2 \frac{\partial}{\partial t}\right\} \\
		& = \frac{c^2\cos ^2 \varphi(t)}{a^4\cos ^4 t} \Delta^{f^* g_{\mathbb{S}^{n-1} }}|_{\mathbb{S}^{m_1}}+\frac{c^2\cos ^2 \varphi(t)}{b^4\sin ^4 t} \Delta^{f^* g_{\mathbb{S}^{n-1}}}|_{\mathbb{S}^{m_2}} \\
		& +\frac{\partial}{\partial t}\left(\frac{1}{h^2(t)}\varphi^{\prime}(t)^2 \frac{\partial}{\partial t}\right)-\left(m_1 \frac{\sin t}{\cos t}-m_2 \frac{\cos t}{\sin t}\right)\frac{1}{h^2(t)} \varphi^{\prime}(t)^2 \frac{\partial}{\partial t} .  \\
		&
	\end{aligned}
\end{equation}
因此
\begin{equation}\label{df1}
	\begin{aligned}
 &\left\|h_f^*g_{N}\right\|^2\\
 =&\sum_{i,  j,  k,  \ell}\left(g_{Q^{m_{1}+m_{2}+1}(a,  b)}\right)^{i k}\left(g_{Q^{m_{1}+m_{2}+1}(a,  b)}\right)^{j \ell} \times\left(h_f^* g_{X}\right)_{i j}\left(h_f^*g_{X}\right)_{k \ell} \\
 =&\sum_{a,  b,  c,  d}\left(\frac{1}{a^2\cos ^2 t}\left(g_{\mathbb{S}^{m_1}}\right)^{a c}\right)\left(\frac{1}{a^2\cos ^2 t}\left(g_{\mathbb{S}^{m_1}}\right)^{b d}\right) \\
& \times\left(\frac{}{}c^2\cos ^2 \varphi(t)\left(f_1^* g_{\mathbb{S}^{n-1}}\right)_{a b}\right)\left(c^2\cos ^2 \varphi(t)\left(f_1^* g_{\mathbb{S}^{n-1}}\right)_{c d}\right) \\
& +\sum_{p,  q,  r,  s}\left(\frac{1}{b^2\sin ^2 t}\left(g_{\mathbb{S}^{m_2}}\right)^{p r}\right)\left(\frac{1}{b^2\sin ^2 t}\left(g_{\mathbb{S}^{m_2}}\right)^{q s}\right) \\
& \times\left(c^2\cos ^2 \varphi(t)\left(f_2^* g_{\mathbb{S}^{n-1}}\right)_{p q}\right)\left(c^2\cos ^2 \varphi(t)\left(f_2^* g_{\mathbb{S}^{n-1}}\right)_{r s}\right)+\frac{k^2(\varphi(t))}{h^4(t)}\varphi^{\prime}(t)^4 \\
 =&\left(\frac{c^2}{a^2\cos ^2 t}\right)^2\cos ^4 \varphi(t)\left\|f_1^* g_{\mathbb{S}^{n-1}}\right\|^2  +\left(\frac{c^2}{b^2\sin ^2 t}\right)^2\cos ^4 \varphi(t)\left\|f_2^* g_{\mathbb{S}^{n-1}}\right\|^2+\frac{k^2(\varphi(t))}{h^4(t)}\varphi^{\prime}(t)
	\end{aligned}
\end{equation}
Since Hopf map is assumed to be symphonic map,  we get 
$$
\begin{aligned}
\begin{cases}
		&\Delta^{h_f^*g_{N}}\left(\cos \varphi(t) f(x)\right)+\frac{\Lambda}{c^2}\cos \varphi(t) f(x)=0 \label{a1}\\
	&\Delta^{h_f^*g_{N}}\left(\sin \varphi(t) \right)+\frac{\Lambda}{d^2} \sin \varphi(t)=0\label{a2}
\end{cases}
\end{aligned}$$
then
\begin{equation}\label{ee}
	\begin{aligned}
	&\frac{c^2\cos ^2 \varphi(t)}{a^4\cos ^4 t} \Delta^{f^* g_{\mathbb{S}^{n-1} }}|_{\mathbb{S}^{m_1}}\cos\varphi(t) f+\frac{c^2\cos ^2 \varphi(t)}{b^4\sin ^4 t} \Delta^{f^* g_{\mathbb{S}^{n-1}}}|_{\mathbb{S}^{m_2}}\cos\varphi(t) f \\
	& +\frac{\partial}{\partial t}\left(\frac{1}{h^2(t)}\varphi^{\prime}(t)^2 \frac{\partial}{\partial t}\right)(\cos\varphi(t) f)-\left(m_1 \frac{\sin t}{\cos t}-m_2 \frac{\cos t}{\sin t}\right)\frac{1}{h^2(t)} \varphi^{\prime}(t)^2 \frac{\partial}{\partial t}(\cos\varphi(t) f)\\	
	&+\frac{\Lambda}{c^2}\cos \varphi(t) f=0. 
	\end{aligned}
\end{equation}
Meanwhile, 
\begin{equation}\label{ee1}
	\begin{aligned}
		&\frac{\partial}{\partial t}\left(\frac{1}{h^2(t)}\varphi^{\prime}(t)^2 \frac{\partial}{\partial t}\right)(\sin\varphi(t) f)-\left(m_1 \frac{\sin t}{\cos t}-m_2 \frac{\cos t}{\sin t}\right)\frac{1}{h^2(t)} \varphi^{\prime}(t)^2 \frac{\partial}{\partial t}(\sin\varphi(t) f)\\	
		&+\frac{\Lambda}{d^2}\sin \varphi(t)=0. 
	\end{aligned}
\end{equation}
since $ f $ is symphonic map, by lemma \ref{eq9},  we can infer that  $ c^2\sin(\varphi(t)) \times $ \eqref{ee}-$ d^2\cos(\varphi(t)) \times $\eqref{ee1}=0 等价于
	\begin{equation}\label{eq2}
	\begin{split}
		&\frac{k^2(\varphi)}{h^2(t)}(\varphi^{\prime}(t)^3)^{\prime}-	\frac{k^2(\varphi)}{h^2(t)}\left(m_1 \frac{\sin t}{\cos t}-m_2 \frac{\cos t}{\sin t}-2\frac{h^\prime(t)}{h(t)}\right)\varphi^{\prime}(t)^3+ \frac{2kk^\prime}{h^2(t)}\varphi^{\prime}(t)^4\quad   \\
		+	&\left(\frac{c^4\cos ^2 \varphi(t)}{a^4\cos ^4 t} \|f_1^* {g_{\mathbb{S}^n}}\|^2 +\frac{c^4\cos^2 \varphi(t)}{b^4 \sin ^4 t} \|f_2^{*} {g_{\mathbb{S}^n}}\|^2 \right)  \sin \varphi(t)\cos \varphi(t)=0
	\end{split}
\end{equation}
Simplifying the equation $ c^2 \cos \varphi \times$ \eqref{ee}+$ d^2 \sin \varphi \times$ \eqref{ee1}=0 we can get \eqref{df1}.

Now we prove (3), set 
	$$\begin{aligned}
L(\var, \var^\prime, t)=\left(\frac{k^4(\varphi(t))}{h^4(t)}\varphi^{\prime}(t)^4+a_1 \frac{\sin^4 \varphi(t)}{\cos ^4 t}+a_2 \frac{\sin ^4 \varphi(t)}{\sin ^4 t}\right) \cos ^{m_1} t \sin ^{m_2} t
\end{aligned}$$
By 	$$\begin{aligned}
	\frac{d}{dt}\left(\frac{\partial L}{\partial \var^\prime}\right)-\frac{\partial L}{\partial \var}=0. 
\end{aligned}$$
we get (3). 
\end{proof}
Now we study the existence of Hopf map.  for the map $h_f$ given by \eqref{eq3}, direct computation shows that 
\begin{equation*}
	\begin{split}
		\| dh_f\|^2=  \frac{c^2\cos^2\varphi(t)}{a^2\cos^2(t)} |df_1 |^2+\frac{c^2\cos^2\varphi(t)}{b^2\sin^2(t)} |df_2 |^2+\frac{\kappa^2(\varphi)}{h^2(t)}\varphi^\prime(t)^2
	\end{split}
\end{equation*}
where $ f_1(\cdot)= f_1(\cdot, y), f_2(\cdot)= f_1(x, \cdot) $,   $ x \in \mathbb{S}^{m_{1}},  y \in \mathbb{S}^{m_{2}} $. 

we define the following functional
\begin{equation*}
	\begin{split}
		J(\varphi)=\frac{1}{4}\int_0^{\frac{\pi}{2}}\left(\frac{k^4(\varphi(t))}{h^4(t)}\varphi^{\prime}(t)^4+a_1 \frac{\sin^4 \varphi(t)}{\cos ^4 t}+a_2 \frac{\sin ^4 \varphi(t)}{\sin ^4 t}\right) \cos ^{m_1} t \sin ^{m_2} t d t
	\end{split}
\end{equation*}
where  $a_1, a_2$ are given in \eqref{cc-1}.   $\operatorname{Vol}(A)$ denotes the volume of  $A$,  $ \mathcal{F}\left(f_1\right)=\int_M |f_1^* g_{\mathbb{S}^n}|^2 $,  $ \mathcal{F}\left(f_2\right)=\int_M |f_2^* g_{\mathbb{S}^n}|^2 $
In fact,  for the metric $g$ given by \eqref{metric},  $
g^\prime=\left(c^2 \cos ^2 s\right) g_{\mathbb{S}^{n_1}}+\left(d^2 \sin ^2 s\right) g_{\mathbb{S}^{n_2}}+k^2(s) \mathrm{d} s^2
$. 
For map $\left(Q^{m_1+m_2+1}(a,  b), g\right)\to \left(N, g_N\right) $,  simple computation shows
$$\begin{aligned}
	J(\varphi)=\int_{Q^{m_1+m_2+1}(a,  b)}\norm{ h_f^* g_N}^2	\mathrm{d} \text{v}_g
\end{aligned}$$

Similar to\lemref{lem-9},  we get 

\begin{lem} For function $ \varphi(t) $ and the map $ h_f $ given in \eqref{eq3},  so
	\begin{equation*}
		\begin{aligned}
			F(h_f)=ab\operatorname{Vol}(\mathbb{S}^{m_1}) \operatorname{Vol}(\mathbb{S}^{m_2})J(\varphi)
		\end{aligned}
	\end{equation*}
\end{lem}
\begin{rem}
Therefore,  symphonic map is also the critical poit of the functional  $J(\varphi)$.  Thus, In order to  find the  ellipsoid join map,  just need  to find the  critical point  of $J(\varphi)$. . 
\end{rem}

 For the space $X$ given by \eqref{space},  we set
	$$\begin{aligned}
	\mathcal{A}=\{\var \in X  \mid  \var(0)=0, \varphi(\frac{\pi}{2})=\frac{\pi}{2} \}
\end{aligned}$$

A similar argument as in \lemref{33} gives that 
\begin{lem}
	There exists minimizer of $J(\varphi)$ in 	$\mathcal{A}$ which $\widehat{\varphi}(\mathrm{t})$ satisifies Euler-Lagrange equation (\cref{eq2})
\end{lem}

	\begin{rem}
		Here we cannot get similar conclusion as Corollary \autoref{r-1}, \autoref{r-2}, 
		\autoref{r-3}, \autoref{r-4}. 
	\end{rem}

\begin{rem}
Only one order ODE is needed to handle with.  
\end{rem}
\begin{proof}
It proceeds as in  \autoref{33}. 
\end{proof}

By   Sobolev embedding of  $\mathrm{W}^{1, 4}\left(\left(0,  \frac{\pi}{2}\right)\right)$ into $\mathrm{C}^0\left(\left[0,  \frac{\pi}{2}\right]\right)$,   we know that $\widehat{\varphi}(t) \in C^0\left(\left[0,  \frac{\pi}{2}\right]\right)$. 
\begin{lem}
	$\widehat{\varphi} \in \mathrm{C}^{\infty}\left(\left(0,  \frac{\pi}{2}\right)\right)$
\end{lem}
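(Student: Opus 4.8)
The plan is to follow the argument of \cref{ccc} essentially verbatim, with the join ODE replaced by its Hopf counterpart \eqref{eq2}. First I would introduce the auxiliary function $\psi(t)=k^2(\widehat{\varphi}(t))\,\widehat{\varphi}'(t)^3$. Differentiating gives $\psi'=k^2(\widehat{\varphi})(\widehat{\varphi}'^3)'+2kk'\widehat{\varphi}'^4$, so the first and third terms of \eqref{eq2} collapse precisely into $\psi'/h^2$; using $k^2(\widehat{\varphi})\,\widehat{\varphi}'^3=\psi$ in the middle term then recasts the whole of \eqref{eq2} as the first-order linear equation
\begin{equation*}
	\psi'(t)+A(t)\psi(t)+B(t)=0,
\end{equation*}
with $A$ and $B$ of exactly the shape appearing in the proof of \cref{ccc}. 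Here $A(t)$ is a combination of $\cot t$, $\tan t$ and $h'(t)/h(t)$ that is bounded on compact subsets of $(0,\pi/2)$, while $B(t)$ is the zeroth-order term of \eqref{eq2}, namely a multiple of $\sin\widehat{\varphi}\cos\widehat{\varphi}$ weighted by the singular factors $\cos^{-4}t$, $\sin^{-4}t$ and the pullback densities $\|f^{*}g_{\mathbb{S}^n}\|^2$.

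The second step is to show $\psi\in C^0((0,\pi/2))$, exactly as in \cref{ccc}. Fixing $t_0\in(0,\pi/2)$ and integrating, I would write $\psi(t)-\psi(t_0)=-\int_{t_0}^t (A\psi+B)\,ds$ and apply H\"older's inequality: the term $A\psi=A\,k^2(\widehat{\varphi})\,\widehat{\varphi}'^3$ is controlled by $\|A\|_{L^4}\,\|\widehat{\varphi}'\|_{L^4}^3$ on the shrinking interval, and the $B$-term is controlled in $L^1$. On every compact subinterval of $(0,\pi/2)$ the coefficients $A,B$ are bounded, since the singular trigonometric factors and $h'/h$ stay bounded away from the endpoints, and $k^2(\widehat{\varphi})=d^2\sin^2\widehat{\varphi}+c^2\cos^2\widehat{\varphi}$ is bounded above and below by positive constants. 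Hence $A,B\in L^4_{\mathrm{loc}}$, and since $\widehat{\varphi}\in W^{1,4}$ both contributions tend to $0$ as $t\to t_0$, giving continuity of $\psi$.

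Third comes the bootstrap. Because $k^2(\varphi)\ge\min(c^2,d^2)>0$, the relation defining $\psi$ can be inverted as
\begin{equation*}
	\widehat{\varphi}'(t)=\left(\frac{\psi(t)}{k^2(\widehat{\varphi}(t))}\right)^{1/3}.
\end{equation*}
Continuity of $\psi$ and of $\widehat{\varphi}$ makes the right-hand side continuous, so $\widehat{\varphi}\in C^1$; then $A$ and $B$ become continuous, whence $\psi'=-A\psi-B\in C^0$, i.e. $\psi\in C^1$. Feeding this back into the displayed formula raises $\widehat{\varphi}$ to $C^2$, and iterating this loop climbs to $\widehat{\varphi}\in C^\infty((0,\pi/2))$.

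I expect the genuine obstacle to be the cube-root inversion in the bootstrap: the map $s\mapsto s^{1/3}$ is continuous but only H\"older at the origin, so the iteration transmits $C^\infty$ regularity only at points where $\psi\neq0$, equivalently where $\widehat{\varphi}'\neq0$. The crux is therefore to show that the minimizer $\widehat{\varphi}$ is strictly increasing, with $\widehat{\varphi}'>0$ throughout the open interval $(0,\pi/2)$, so that $\psi$ never vanishes there and the cube root is smooth. This monotonicity should follow from the variational characterization of $\widehat{\varphi}$ together with the boundary data $\widehat{\varphi}(0)=0$, $\widehat{\varphi}(\pi/2)=\pi/2$ and the constraint $0\le\widehat{\varphi}\le\pi/2$; once it is in hand, the H\"older estimate and the bootstrap are routine.
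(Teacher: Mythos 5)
Your proposal follows the paper's own proof of this lemma essentially verbatim: the paper likewise sets $\psi(t)=k^2(\widehat{\varphi})\,\widehat{\varphi}'(t)^3$, rewrites \eqref{eq2} as the first-order linear equation $\psi'+A\psi+B=0$, proves $\psi\in C^0$ on $(0,\pi/2)$ by the same H\"older estimate using $\widehat{\varphi}\in W^{1,4}$ and the local integrability of $A$ and $B$, and then invokes a bootstrap, referring back to \cref{ccc} for the details. The one place you go beyond the paper is your final paragraph: the paper simply asserts that the bootstrap yields $C^\infty$, without addressing the degeneracy of the inversion $\widehat{\varphi}'=(\psi/k^2(\widehat{\varphi}))^{1/3}$ at zeros of $\psi$, and your concern there is legitimate --- for equations of $4$-Laplacian type, regularity beyond $C^{1,\alpha}$ can genuinely fail at points where the gradient vanishes. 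Note, however, that your proposed remedy (strict monotonicity of the minimizer, hence $\psi\neq 0$ on the open interval) is stated as an expectation rather than proved, so as written neither your argument nor the paper's actually closes that step.
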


 \begin{proof}
  	Set $\psi(t)=k(\widehat{\varphi})^2\left(\widehat{\varphi}^{\prime}(t)\right)^3$.  By \lemref{dd} ,  we have 
 	$$
 	\psi^{\prime}(t)+A(t) \psi(t)+B(t)=0
 	$$
 where
 	$$
 	\begin{aligned}
 		& A(t)=  \frac{k(\alpha)^2}{h^2(t)}\left(m_1 \frac{\sin t}{\cos t}-m_2 \frac{\cos t}{\sin t}-2\frac{h^\prime(t)}{h(t)}\right) \\
 		& B(t)=-h^2(t)\sin \widehat{\varphi}(t) \cos \widehat{\varphi}(t)\left(\frac{c^4\cos ^2 \widehat{\varphi}(t)}{b^4\sin ^4 t} \|f_1^* g_{\mathbb{S}^{n-1}}\|^2 +\frac{c^4\cos ^2 \widehat{\varphi}(t)}{a^4\cos ^4 t}\| f_2^* g_{\mathbb{S}^{n-1}} \|^2\right) .  \\
 		&
 	\end{aligned}
 	$$
 The remained proof is as  in \autoref{ccc}.  
 \end{proof}
 

\begin{thm}\label{thm4. 1}For map $f: \mathbb{S}^{m_{1}} \times \mathbb{S}^{m_{2}} \rightarrow \mathbb{S}^{n-1}$,  The Hopf map given by (\ref{eq3}) exists, and is also symohonic map. 
\end{thm}
\bibliographystyle{acm}	
\bibliography{J:/myonlybib/myonlymathscinetbibfrom2023, J:/myonlybib/low-quality-bib-to-publish} 
	\end{document}